\documentclass[11pt]{amsart}
\usepackage[centertags]{amsmath}
\usepackage{amsfonts,amssymb}
\usepackage{graphicx}
\usepackage{enumerate}
 \usepackage{url}

  \newtheorem{theorem}{Theorem}
  \newtheorem{corollary}{Corollary}

  \newtheorem{lemma}{Lemma}

  \DeclareMathOperator{\re}{Re}

\begin{document}

\title[Error terms when counting smooth and rough numbers]{A link between error terms when counting smooth and rough numbers}

\author{Andreas Weingartner} 
\address{Department of Mathematics, Southern Utah University, 351 West University Boulevard, Cedar City, Utah 84720, USA} 
\email{weingartner@suu.edu} 

\begin{abstract}
We establish a relationship between error terms appearing in estimates for the counting functions of smooth and rough numbers. 
We then apply this link to obtain an explicit upper bound for the error term  
in de Bruijn's approximation $\Lambda$ for the count of smooth numbers,
from an explicit upper bound, due to Fan, for the error term in a variant of de Bruijn's estimate for the count of rough numbers.
\end{abstract}

\maketitle

\section{Introduction}
Let $\mathcal{S}_y$ be the set of $y$-smooth (or $y$-friable) numbers, i.e. natural numbers all of whose prime factors are at most $y$, 
and define
$$
\Psi(x,y) := |\{1\le n \le x: p|n \Rightarrow p\le y\}| = |\mathcal{S}_y \cap [1,x]|,
$$
the corresponding counting function. Let 
$$
\Phi(x,y) := |\{1\le n\le x: p|n \Rightarrow p> y\}|,
$$
the number of $y$-rough (or $y$-sifted) integers up to $x$. With
$$
u:=\frac{\log x}{\log y},
$$
we have the classical estimates
$$
\Psi(x,y) \sim x \rho(u), \quad \Phi(x,y) \sim \frac{x \omega(u)}{\log y} \quad (u>1 \text{ fixed},\  x\to \infty),
$$
where $\rho(u)$ is Dickman's function and $\omega(u)$ is Buchstab's function.

De Bruijn \cite{BruI} introduced a more precise estimate for $\Psi(x,y)$, which can be defined as
(see \cite[Eq III.5.81]{Ten}) 
\begin{equation}\label{LambdaDef}
\Lambda(x,y) := x \rho(u)-\{x\}-x\int_0^u \rho'(u-v) \frac{\{y^v\}}{y^v} dv \qquad (x\ge 1, \ y\ge 2),
\end{equation}
where $\{x\}$ denotes the fractional part of $x$, and $\rho'(u)$ is defined by right-continutiy at $u=0,1$. 
De Bruijn \cite{Bru} also discovered an improved approximation for $\Phi(x,y)$, namely 
$$
W(x,y):= x \mu_y(u) \Pi(y) e^\gamma \log y,
$$
where $\gamma$ is Euler's constant,
$$
\mu_y(u) := \int_0^u \omega(u-v) y^{-v} dv,
$$
and
$$
\Pi(y) := \prod_{p\le y} \left(1-\frac{1}{p}\right) .
$$
We will work instead with the following two variants of $W(x,y)$:
$$
V(x,y):=1_{x\ge 1} + x \left( \Pi(y) - \frac{e^{-\gamma}}{\log y} +\mu_y(u)\right)
$$
and
$$
V^*(x,y):= 1_{x\ge 1} + x \mu_y(u).
$$
The function $V(x,y)$ appears naturally when approximating 
$\Phi(x,y)$ with the saddle point method (see \cite[Section III.6.4]{Ten}, where $V$ is called $W_1$), 
it is very close to $W(x,y)$ in a large domain (see \cite[Eq III.6.61]{Ten}), and it represents
the natural counterpart to $\Lambda(x,y)$ in Theorem \ref{thm1}.
The simpler expression $V^*(x,y)$ is close to both $V(x,y)$ and $W(x,y)$ when $y$ is large. 

We define the following error terms:
$$\Delta(x,y):= \Psi(x,y) -\Lambda(x,y),$$
$$
Q(x,y):= V(x,y) - \Phi(x,y), \quad Q^*(x,y):=V^*(x,y) - \Phi(x,y),
$$
$$
R(x,y):= \sum_{n\in S_y} Q(x/n,y), \quad R^*(x,y):= \sum_{n\in S_y} Q^*(x/n,y).
$$
With M\"{o}bius inversion (see Lemma \ref{LemMobInv}) we can express $Q$ in terms of $R$:
$$
Q(x,y)= \sum_{n\in S_y} \mu(n) R(x/n,y), \quad Q^*(x,y)= \sum_{n\in S_y} \mu(n) R^*(x/n,y).
$$
Our main result expresses $\Delta$ in terms of $R$ (resp. $R^*$) and vice versa.

\begin{theorem}\label{thm1}
For $x\ge 1$, $y\ge 2$, we have
\begin{equation}\label{thm1eq1}
\Delta(x,y) =R(x,y) + x \int_{-\infty}^u \frac{R(y^v,y)}{ y^{v}} \rho'(u-v) dv,
\end{equation}
\begin{equation}\label{thm1eq1*}
\Delta(x,y) =R^*(x,y) + x \int_{0}^u \frac{R^*(y^v,y)}{ y^{v}} \rho'(u-v) dv,
\end{equation}
\begin{equation}\label{thm1eq2}
R(x,y) = \Delta(x,y)+x \int_0^\infty \frac{\Delta(y^v,y)}{y^v} \left(\omega(u-v)-e^{-\gamma}\right)  dv,
\end{equation}
\begin{equation}\label{thm1eq2*}
R^*(x,y) = \Delta(x,y)+x \int_0^u \frac{\Delta(y^v,y)}{y^v}\omega(u-v)  dv.
\end{equation}

\end{theorem}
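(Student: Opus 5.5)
The plan is to work with multiplicative (Dirichlet) convolution of functions of $x$, that is, Stieltjes convolution on $[1,\infty)$. I write $(F\ast dG)(x)=\int_{1^-}^{x} F(x/t)\,dG(t)$ and use Mellin transforms $\widehat{dG}(s)=\int t^{-s}\,dG(t)$. Two basic facts drive everything. First, every $n\ge1$ factors uniquely as $n=mk$ with $m\in\mathcal S_y$ and $k$ $y$-rough, which gives
$$\lfloor x\rfloor=\sum_{m\in \mathcal S_y}\Phi(x/m,y)=(\Phi\ast d\Psi)(x,y).$$
Second, by definition $R=\sum_{m\in\mathcal S_y}Q(x/m,y)=V\ast d\Psi-\lfloor x\rfloor$. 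So if I can establish the key identity
$$(V\ast d\Lambda)(x,y)=\lfloor x\rfloor \qquad (x\ge1),$$
then subtracting it from the identity for $R$ gives $R=V\ast d\Psi-V\ast d\Lambda=V\ast d\Delta$. By commutativity of the convolution this equals $\Delta\ast dV$.

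I then expand $dV$ to read off \eqref{thm1eq2}. The measure $dV$ has a unit atom at $t=1$, coming from $1_{x\ge1}$, plus an absolutely continuous part. For the continuous part I use $x\mu_y(u)=x\int_0^u\omega(u-v)y^{-v}dv=\int_0^u\omega(v)y^{v}\,dv$, whose derivative in $x$ is $\omega(u)/\log y$. Adding the constant term gives the density $\Pi(y)-e^{-\gamma}/\log y+\omega(u)/\log y$. Substituting $t=x/y^v$, so that $\Delta(x/t)=\Delta(y^v)$ and $dt/t^2\cdot$ becomes $\log y\,y^{-v}$-weighted, should produce exactly the term $x\int_0^\infty\Delta(y^v,y)y^{-v}(\omega(u-v)-e^{-\gamma})\,dv$ in \eqref{thm1eq2}. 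I expect the constant $\Pi(y)$ to cancel against a tail contribution of $\Delta$ near $v\to\infty$; if it does not cancel cleanly, the issue will show up exactly there. The same computation with $V^*$ needs the analogous identity $V^*\ast d\Lambda^*=\lfloor x\rfloor$, where $\Lambda^*$ is the version without the constant term. That identity reduces to the $\mu_y$ part alone, which explains why the integral in \eqref{thm1eq2*} is over $[0,u]$.

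For \eqref{thm1eq1} and \eqref{thm1eq1*}, I invert $dV$ in the convolution algebra. The key identity says $\widehat{dV}\cdot\widehat{d\Lambda}=\zeta(s)$, so $dV^{-1}=d\Lambda\ast d(\text{Möbius})$. Equivalently, $\Delta=R\ast d\Lambda\ast\mu$, and via Lemma~\ref{LemMobInv} this becomes $\Delta=\sum\mu(n)(\cdots)$. Rather than handle that expression directly, I will check that the measure $\delta_1+x\,\rho'(u-\cdot)\,$-density is the convolution inverse of $dV$. For this I use the Laplace transform identities $\hat\rho(s)=\exp(\gamma+I(-s))$ and $1+\hat\omega(s)=1/(s\hat\rho(s))$, where hats denote Laplace transforms in the variable $v$ (Tenenbaum III.5–III.6). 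The inverse of the atom-plus-$\omega$ density is then atom-plus-$\rho'$ density. Indeed $s\hat\rho(s)-\rho(0)$ is the transform of $\rho'$, and $(1+\hat\omega)\cdot s\hat\rho=1$ is precisely the required inversion, with the $e^{-\gamma}$ constant term corresponding to the lower limit $-\infty$ in \eqref{thm1eq1}.

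The main obstacle is the key identity $V\ast d\Lambda=\lfloor x\rfloor$ together with its starred analogue. I would prove it by taking Mellin transforms for $\re s>1$. From \eqref{LambdaDef} one computes $\widehat{d\Lambda}(s)=\zeta(s)\,(s-1)\log y\,\hat\rho((s-1)\log y)$, where the $\{x\}$ and $\{y^v\}$ corrections are exactly what convert $x\rho(u)$ into a $\zeta(s)$ factor. Similarly $\widehat{dV}(s)$ becomes $1/((s-1)\log y\,\hat\rho((s-1)\log y))$ by the identity $1+\hat\omega=1/(s\hat\rho)$, with the constant $\Pi(y)-e^{-\gamma}/\log y$ accounting for the $\rho'$ convention at $0$. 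Their product is $\zeta(s)=\widehat{d\lfloor\cdot\rfloor}(s)$, and uniqueness of Mellin transforms concludes. Careful bookkeeping of the atoms and of the discontinuities of $\rho'$ at $u=0,1$ is where errors are most likely, so I would double-check that step numerically for small $u$.
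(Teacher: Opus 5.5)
Your convolution route differs from the paper's, but the step you single out as the key identity, $(V\ast d\Lambda)(x)=\lfloor x\rfloor$, is false. Everything you derive for the unstarred quantities rests on it.

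Write $V=V^*+cx$ with $c=\Pi(y)-e^{-\gamma}/\log y$. Since $V(x,y)=cx\neq 0$ for $0<x<1$, and these values enter $R$ through the terms $n>x$, the convolution must run over all $t\ge 1$, not only $t\le x$. The linear piece then contributes $cx\int_{1^-}^\infty t^{-1}\,d\Lambda(t)=cx\,e^{\gamma}\log y$; this is your own formula for $\widehat{d\Lambda}$ at $s=1$, with $\hat\rho(0)=e^{\gamma}$. Hence $(V\ast d\Lambda)(x)=\lfloor x\rfloor+x\bigl(e^{\gamma}\Pi(y)\log y-1\bigr)$. You can check this by hand for $1\le x<y$, where $\Lambda(x)=\lfloor x\rfloor$ and $\mu_y(u)=0$; at $y=2$ the error is about $-0.38x$.

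The transform $1/\bigl((s-1)\log y\,\hat\rho((s-1)\log y)\bigr)$ that you assign to $dV$ is in fact the transform of $dV^*$. The term $cx$ lives on all of $(0,\infty)$, has no Mellin transform in any half-plane, and has nothing to do with the convention for $\rho'$ at $0$. Consequently $V\ast d\Delta=R+x\bigl(1-e^{\gamma}\Pi(y)\log y\bigr)\neq R$. Expanding $\Delta\ast dV$ as planned puts $\omega(u-v)-e^{-\gamma}+\Pi(y)\log y$ in the bracket of \eqref{thm1eq2}. The $\Pi(y)$ term does not cancel: its contribution $x\,\Pi(y)\log y\int_0^\infty \Delta(y^v,y)y^{-v}\,dv$ equals exactly that discrepancy. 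Nor is there any ``$\Lambda^*$'' to introduce, since changing $\Lambda$ would change $\Delta$.

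What is true is $V^*\ast d\Lambda=\lfloor x\rfloor$ with the actual $\Lambda$. Indeed, \eqref{LambdaDef} says precisely that $\Lambda=\lfloor\cdot\rfloor\ast dM$ with $dM=\delta_1+\rho'(\log t/\log y)\,dt/\log y$. Moreover $\widehat{dM}\,\widehat{dV^*}=z\hat\rho(z)\bigl(1+\hat\omega(z)\bigr)=1$ with $z=(s-1)\log y$. This gives $R^*=V^*\ast d\Psi-\lfloor x\rfloor=\Delta\ast dV^*$, which is \eqref{thm1eq2*}, and $\Delta=R^*\ast dM$, which is \eqref{thm1eq1*}.

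Passing to $R$ then requires $R^*=R+\beta_y x$, where $\beta_y=\frac{e^{-\gamma}}{\log y}\Pi(y)^{-1}-1$, together with two facts your plan lacks.
\begin{enumerate}
\item $\int_{1^-}^\infty t^{-1}\,dM(t)=1+\int_0^\infty\rho'(w)\,dw=0$, so $dM$ annihilates linear functions and $\Delta=R\ast dM$. The range $t>x$ of this convolution is the part $v<0$ of the integral in \eqref{thm1eq1}. The paper uses $R(x,y)=-\beta_y x$ for $0<x<1$ to the same effect.
\item $e^{-\gamma}\int_0^\infty\Delta(y^v,y)y^{-v}\,dv=\beta_y$. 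In your framework this follows from $\int_{1^-}^\infty t^{-1}\,d\Psi(t)=\Pi(y)^{-1}$ and $\int_{1^-}^\infty t^{-1}\,d\Lambda(t)=e^{\gamma}\log y$. The paper gets it instead by letting $x\to\infty$ in the starred identity, using the decay of $R$, $\Delta$ and $\omega-e^{-\gamma}$.
\end{enumerate}
Put differently, the kernel reproducing $R$ is $V-\Pi(y)x$, not $V$.

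With these corrections your argument is a clean alternative to the paper's. The paper instead turns Lemma~\ref{Lem1} into a Volterra equation for $\Psi(y^u,y)/y^u$, solves it by Laplace transform, and brings in $\Lambda$ only at the end through \eqref{LambdaDef}.
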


In the remainder of this section, we will state several corollaries to Theorem \ref{thm1} together with their short proofs.
The lemmas needed in those proofs are established in Section \ref{SecLemmas}.
We prove Theorem \ref{thm1} in Section \ref{SecProof}.

\begin{corollary}\label{corexact}
Let $X\ge 1$, $y\ge 2$ and $f(y)>0$. If 
$$
|Q(x,y)| \le x f(y) \quad (1\le x \le  X) ,
$$
then
$$
|R(x,y)| \le x f(y) \Pi(y)^{-1} \quad (0 < x \le X) 
$$
and 
$$
|\Delta(x,y)| \le 2 x f(y) \Pi(y)^{-1} \quad (1\le  x \le  X).
$$
This also holds if $(Q,R)$ is replaced by $(Q^*,R^*)$. 
\end{corollary}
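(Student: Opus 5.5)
The argument has two parts: a direct bound on $R$ from its definition, followed by an application of Theorem \ref{thm1}, specifically \eqref{thm1eq1}.

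\emph{Bound on $R$.} For $0<x<1$ we have $\Psi(x,y)=0$, $\Phi(x,y)=0$ and $1_{x\ge1}=0$. Also $\mu_y(u)=0$ for $u<0$, since then the integral runs over an empty range. It follows that $V(x,y)$, and hence $Q(x,y)$, vanishes for $0<x<1$; the same holds for $V^*$ and $Q^*$. Therefore, for $0<x\le X$,
$$R(x,y)=\sum_{n\in S_y,\, n\le x} Q(x/n,y),$$
and every argument $x/n$ in this sum lies in $[1,X]$. The hypothesis then gives
$$|R(x,y)|\le \sum_{n\in S_y} \frac{x}{n} f(y) = x f(y)\prod_{p\le y}(1-1/p)^{-1} = x f(y)\Pi(y)^{-1},$$
using the Euler product over $y$-smooth $n$. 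This proves the first claim. The identical argument works for $(Q^*,R^*)$.

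\emph{Bound on $\Delta$.} Take $1\le x\le X$ and insert the bound on $R$ into \eqref{thm1eq1}. For $v\le u$ we have $y^v\le x\le X$, so $|R(y^v,y)|/y^v\le f(y)\Pi(y)^{-1}$. This gives
$$|\Delta(x,y)|\le x f(y)\Pi(y)^{-1}\Big(1+\int_{-\infty}^u |\rho'(u-v)|\,dv\Big).$$
The remaining integral equals $\int_0^\infty|\rho'(t)|\,dt$, because $\rho'(t)=0$ for $t<0$. Since $\rho'(t)=0$ on $[0,1)$ and $\rho'(t)\le 0$ for $t\ge1$, we get
$$\int_0^\infty|\rho'(t)|\,dt=-\int_0^\infty\rho'(t)\,dt=\rho(0)-\lim_{t\to\infty}\rho(t)=1.$$
Hence $|\Delta(x,y)|\le 2xf(y)\Pi(y)^{-1}$. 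For the starred version, \eqref{thm1eq1*} has an integral over $[0,u]$, which is bounded by the same quantity.

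\emph{Main obstacle.} The only delicate point is confirming that $Q$ vanishes for $0<x<1$, so that the hypothesis on $[1,X]$ covers every term in the sum defining $R$. This needs both the indicator $1_{x\ge1}$ in $V$ and the fact that $\mu_y(u)=0$ when $u<0$. For $V$ there is an additional term $x(\Pi(y)-e^{-\gamma}/\log y)$, which does not vanish as written for $0<x<1$. I would need to check whether the paper's convention actually makes $V(x,y)=0$ for $x<1$, presumably via Lemma \ref{LemMobInv} or the way $R$ is defined there. If it does not, the range of $v$ in \eqref{thm1eq1} extending below $0$ must be handled with care.
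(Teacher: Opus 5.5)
\textbf{There is a genuine gap in the unstarred case.} Your argument is complete for the starred pair $(Q^*,R^*)$. For the unstarred pair $(Q,R)$, which is the main case of the statement, it rests on a false claim: $Q(x,y)$ does not vanish for $0<x<1$. By Lemma \ref{Lem0}, $Q(x,y)=x\bigl(\Pi(y)-e^{-\gamma}/\log y\bigr)$ for all $0<x\le y$. Hence the sum defining $R(x,y)$ runs over all $n\in\mathcal{S}_y$, not only $n\le x$. In fact $R(x,y)=-\beta_y x$, which is nonzero in general, for $0<x<1$. So your truncation $R(x,y)=\sum_{n\le x}Q(x/n,y)$ is wrong, and the hypothesis, which only covers $[1,X]$, does not directly control the terms with $n>x$. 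You sensed this in your last paragraph but left it open. No convention in the paper makes $V(x,y)=0$ for $x<1$.

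\textbf{The fix.} The missing idea is a one-line extension of the hypothesis. On $(0,y]$ we have $Q(x,y)=c_y x$, where $c_y:=\Pi(y)-e^{-\gamma}/\log y$ does not depend on $x$. Since $1\le y$ and $1\in[1,X]$, the hypothesis at $x=1$ gives $|c_y|=|Q(1,y)|\le f(y)$. Hence $|Q(x,y)|\le xf(y)$ for all $0<x\le X$; this is the same device used in the proof of Lemma \ref{LemRQ}. Then
\[
|R(x,y)|\le\sum_{n\in\mathcal{S}_y}|Q(x/n,y)|\le xf(y)\sum_{n\in\mathcal{S}_y}\frac1n=xf(y)\Pi(y)^{-1}\qquad(0<x\le X).
\]
This is exactly the paper's argument via Lemma \ref{Lem0} and \eqref{Sum1}.

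\textbf{The rest of your argument.} Once $R$ is bounded on all of $(0,X]$, your treatment of $\Delta$ through \eqref{thm1eq1} goes through unchanged. The part of the integral with $v<0$ (where $y^v<1$ and $R(y^v,y)=-\beta_y y^v$) is covered by the extended bound, and $\int_0^\infty|\rho'(t)|\,dt=1$ gives the factor $2$.

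A minor point: $\mu_y(u)=0$ for $u\le 1$ because $\omega$ vanishes on $(-\infty,1)$. It is not because the range of integration is empty; for $u<0$, $\int_0^u$ is $-\int_u^0$.
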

\begin{proof}
The first implication follows from Lemma \ref{Lem0} and
\begin{equation}\label{Sum1}
 \sum_{n \in \mathcal{S}_y} \frac{1}{n}= \Pi(y)^{-1} .
\end{equation}
Equation \eqref{thm1eq1} (resp.  \eqref{thm1eq1*}) yields the bound for $|\Delta(x,y)| $.
\end{proof}

Corollary \ref{corexact} and the explicit estimates for $\Phi(x,y)$ due to Fan \cite{Fan} lead to the following explicit estimates for $\Delta(x,y)$.
\begin{corollary}\label{corFan}
We have
$$
|\Delta(x,y)| \le  \frac{15.8 x \log^{1/4} y}{\exp\{\sqrt{(\log y)/6.315}\} }  \qquad (x\ge 1, \ y\ge 2).
$$
Assuming the Riemann hypothesis (RH), we have
$$
|\Delta(x,y)| \le   \frac{1.66 x\log^2 y}{\sqrt{y}} \qquad (x\ge 1,\  y\ge 3).
$$
\end{corollary}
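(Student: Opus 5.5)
The plan is to feed Fan's explicit bounds for the error in a de Bruijn-type approximation to $\Phi(x,y)$ into Corollary \ref{corexact}, choosing whichever of $(Q,R)$ or $(Q^*,R^*)$ matches the form of Fan's estimate. Fan's result gives, for $x\ge 1$ and $y\ge 2$, an inequality
$$
|\Phi(x,y) - V^*(x,y)| \le x f(y),
$$
or the analogue with $V$; here $f(y)$ is essentially $c\,\log^{-3/4} y\,\exp\{-\sqrt{(\log y)/6.315}\}$ unconditionally, and of the shape $c\,(\log y)/\sqrt{y}$ under RH. First I would check which variant Fan controls. If it is $W$ or $V$ rather than $V^*$, the difference $x(\Pi(y)-e^{-\gamma}/\log y)$ between $V$ and $V^*$ can be absorbed using an explicit Mertens-type estimate (Rosser--Schoenfeld, or Dusart-type zero-free region bounds), so that we obtain $|Q(x,y)|\le xf(y)$ or $|Q^*(x,y)|\le xf(y)$ for all $x\ge 1$. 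Since the hypothesis holds for every $X$, Corollary \ref{corexact} with $X\to\infty$ then gives
$$
|\Delta(x,y)|\le 2x f(y)\Pi(y)^{-1}\qquad (x\ge1).
$$

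The second step is to bound $\Pi(y)^{-1}$ explicitly. Using Rosser--Schoenfeld,
$$
\Pi(y)^{-1} < e^{\gamma}\log y\,\bigl(1+1/(2\log^2 y)\bigr)\qquad (y>1),
$$
so $\Pi(y)^{-1}\le C\log y$ with $C$ slightly above $e^\gamma\approx1.781$ for large $y$. Multiplying, $2f(y)\Pi(y)^{-1}$ has the shape $c'\log^{1/4}y\,\exp\{-\sqrt{(\log y)/6.315}\}$, and under RH the shape $c''\log^2 y/\sqrt y$, which matches the exponents in the statement. The constants $15.8$ and $1.66$ should come from multiplying Fan's constant by $2e^\gamma$ together with a small correction factor.

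The main obstacle is the bookkeeping of constants over the whole range $y\ge2$ (resp. $y\ge3$). For small $y$ the Mertens correction factor is not negligible, and Fan's bound may only be stated for $y$ beyond some threshold. In that range I would instead use a trivial bound, for instance $|\Delta(x,y)|\le \Psi(x,y)+|\Lambda(x,y)|\ll x$, or directly $|Q|\le x\cdot(\text{explicit constant})$, and check that the right-hand side of the claim exceeds it there. This is plausible because $15.8\log^{1/4}y\,e^{-\sqrt{(\log y)/6.315}}$ is large for moderate $y$, and $1.66\log^2y/\sqrt y\ge1$ for $y$ up to roughly $10^3$. For the intermediate range I would take the sup of $\Pi(y)^{-1}/\log y$ numerically, or use the monotonicity of the Rosser--Schoenfeld factor. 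I would then verify that the resulting constant is at most $15.8$ (resp. $1.66$) at the point where each regime takes over.
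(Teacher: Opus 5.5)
Your route is the paper's. Fan's estimate supplies the hypothesis of Corollary \ref{corexact} for $(Q^*,R^*)$. The Rosser--Schoenfeld bound then turns $2xf(y)\Pi(y)^{-1}$ into the stated shapes for large $y$; the paper gets them for $y\ge 10^{18}$ unconditionally and for $y\ge 30000$ under RH. A trivial bound covers smaller $y$.

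Two bookkeeping corrections:
\begin{enumerate}
\item Fan's Corollary 1.2 bounds $|\Phi(x,y)-x\mu_y(u)|$ only for $x\ge y\ge 2$. So you need $|Q^*(x,y)|\le 1+xh(y)\le x(1/y+h(y))$ in that range, and $Q^*(x,y)=0$ for $1\le x<y$ (Lemma \ref{Lem0}).
\item The bound $\Pi(y)^{-1}<e^{\gamma}\log y\,(1+1/(2\log^2 y))$ holds only for $y\ge 286$, not for all $y>1$. At $y=3$ its right side is about $2.77$, while $\Pi(3)^{-1}=3$.
\end{enumerate}

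\textbf{The genuine gap is your small-$y$ step in the RH case.} The triangle inequality $|\Delta|\le\Psi+|\Lambda|$ only gives $|\Delta(x,y)|\le 2x$. At $y=3$ the claimed bound is $1.66\,x\log^2 3/\sqrt 3\approx 1.157\,x$. Fan's route does not reach $y=3$ either: with $f(y)=1/y+h(y)$ you already have $2f(3)\Pi(3)^{-1}\ge 2$. The claimed bound lies strictly between $x$ and $2x$ for $3\le y\le 5$, and again for $y$ from roughly $2700$ to $30000$.

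What you need is the sharp bound $|\Delta(x,y)|\le x$. It comes from one-sided estimates, not the triangle inequality:
\begin{itemize}
\item $0\le\Psi(x,y)\le\lfloor x\rfloor$;
\item from \eqref{LambdaDef}, using $\rho'\le 0$, $0\le\{y^v\}/y^v\le 1$ and $\int_0^u\rho'(u-v)\,dv=\rho(u)-1$, you get $-\{x\}\le\Lambda(x,y)\le\lfloor x\rfloor$.
\end{itemize}
Hence $-\lfloor x\rfloor\le\Delta(x,y)\le x$.

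With this, two numerical checks remain:
\begin{itemize}
\item $1.66\log^2 y/\sqrt y\ge 1$ for $3\le y\le 30000$. It is about $1.02$ at $y=30000$, so your estimate ``up to roughly $10^3$'' covers less than the range you need.
\item $15.8\log^{1/4}y\,\exp\{-\sqrt{(\log y)/6.315}\}\ge 1$ for $2\le y\le 10^{18}$. Its minimum there is about $3.09$, so in the unconditional case your factor $2$ would have been harmless.
\end{itemize}
This also explains the restriction $y\ge 3$ under RH: at $y=2$ the right side is only about $0.56\,x$.
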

\begin{proof}
Corollary 1.2 of \cite{Fan} shows that $|\Phi(x,y)-x \mu_y(u)| \le x h(y)$, for $x\ge y \ge 2$,
where $h(y)$ is one of two explicit functions, depending on whether or not RH is assumed. 
This implies
$$|Q^*(x,y)|\le 1 + x h(y)  \le x(1/y + h(y) ) \quad (x\ge y\ge 2). $$ 
This upper bound is also valid if $1\le x<y$, since $Q^*(x,y) = 0$ in that case, by Lemma \ref{Lem0}.
Corollary \ref{corexact} 
with $f(y):= 1/y + h(y)$ yields
$$
|\Delta(x,y)| \le  2x(y^{-1} + h(y)) \Pi(y)^{-1} \quad (x\ge 1,\ y\ge 2).
$$
With the estimate $\Pi(y)^{-1}<e^\gamma \log y (1 + 0.5/\log^2 y)$ for $y\ge 286$ (see \cite[Thm 8]{RS}),
we obtain the stated bounds, for $y\ge 10^{18}$ without RH, and for $y\ge 30000$ with RH.
When $y$ is smaller than these values, the stated bounds are worse than the trivial bound $|\Delta(x,y)|\le x$:  Indeed, we have  
$0\le \Psi(x,y) \le \lfloor x \rfloor $ and $-\{x\} \le \Lambda(x,y) \le \lfloor x \rfloor $, which
follows at once from \eqref{LambdaDef}, since $0\le \{y^v\}/y^v \le 1$ and $\rho'(u)\le 0$.
\end{proof}

Corollary \ref{corexact2} gives an explicit bound for $Q^*$ based one for $\Delta$. 

\begin{corollary}\label{corexact2}
Let $X\ge 1$, $y\ge 2$ and $f(y)>0$. If 
$$
|\Delta(x,y)| \le x f(y) \rho(u) \quad (1\le x \le  X) ,
$$
then
$$
|R^*(x,y)| \le x f(y)  \quad (1\le  x \le X) 
$$
and 
$$
|Q^*(x,y)| \le x f(y) \Pi(y)^{-1} \quad (1\le x \le  X).
$$
\end{corollary}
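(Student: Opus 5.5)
We use \eqref{thm1eq2*} of Theorem \ref{thm1}, then M\"obius inversion to pass from $R^*$ to $Q^*$, in the same way Corollary \ref{corexact} was derived.

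\emph{Step 1: the bound for $R^*$.} Fix $1\le x\le X$ and write $u=\log x/\log y\ge 0$. In the integral in \eqref{thm1eq2*}, $v$ ranges over $[0,u]$, so $y^v\in[1,x]\subseteq[1,X]$ and the hypothesis applies:
$$|\Delta(y^v,y)|\le y^v f(y)\rho(v).$$
Since $\omega\ge 0$, this gives
$$
|R^*(x,y)|\le x f(y)\rho(u) + x f(y)\int_0^u \rho(v)\,\omega(u-v)\,dv .
$$
It remains to show that
$$
\rho(u)+\int_0^u\rho(v)\,\omega(u-v)\,dv = 1 \qquad (u\ge 0).
$$
This is the classical convolution identity linking Dickman's and Buchstab's functions, with the convention $\omega(t)=0$ for $t<1$. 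One route is Laplace transforms. We have $\hat\rho(s)=e^{\gamma+I(-s)}$, and the transform of $\omega$ (as a function vanishing on $[0,1)$) is $\hat\omega(s)=\frac{1}{s\hat\rho(s)}-1$. Hence
$$\hat\rho\,(1+\hat\omega)=\frac{1}{s},$$
which is the Laplace transform of $1$. A more intrinsic route, within the paper's framework, is to apply \eqref{thm1eq2*} heuristically to the "main terms": the identity expresses the fact that $\sum_{n\in\mathcal S_y}V^*(x/n,y)\approx x$, since every integer factors uniquely as a smooth part times a rough part. I would state the identity as a lemma and prove it via Laplace transforms, or by checking that both sides are continuous, equal $1$ on $[0,1]$, and have derivatives that agree using $(u\omega(u))'=\omega(u-1)$ and $u\rho'(u)=-\rho(u-1)$.

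\emph{Step 2: the bound for $Q^*$.} We have $Q^*(x,y)=\sum_{n\in\mathcal S_y}\mu(n)R^*(x/n,y)$. The terms with $x/n<1$ vanish, because $R^*(t,y)=0$ for $t<1$; this follows from Lemma \ref{Lem0}, as $Q^*(t,y)=0$ for $t<1$. The remaining terms have $1\le x/n\le X$, so Step 1 gives $|R^*(x/n,y)|\le (x/n)f(y)$. Summing with \eqref{Sum1} yields
$$|Q^*(x,y)|\le x f(y)\sum_{n\in\mathcal S_y}\frac1n = x f(y)\,\Pi(y)^{-1}.$$

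The main obstacle is the identity $\rho+\rho*\omega=1$. Its proof is routine via Laplace transforms, but the conventions must be checked carefully: the value of $\omega$ on $[0,1)$, the behaviour at $u<1$, and whether the integral in \eqref{thm1eq2*} effectively starts at $v\ge 0$ with $\omega(u-v)=0$ for $u-v<1$.
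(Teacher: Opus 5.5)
Your proof is correct and follows the paper's argument. The bound for $R^*$ comes from \eqref{thm1eq2*}, $\omega\ge 0$, and the identity $\rho(u)+\int_0^u\rho(v)\,\omega(u-v)\,dv=1$; the paper simply cites this identity as Exercise 297 of Tenenbaum, while your Laplace-transform derivation from $s\widehat{\rho}(s)(1+\widehat{\omega}(s))=1$ uses the same transform identity the paper invokes in proving Theorem \ref{thm1}, and the bound for $Q^*$ then follows, as in the paper, from Lemma \ref{Lem0}, Lemma \ref{LemMobInv} and \eqref{Sum1}.
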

\begin{proof}
The first implication is a consequence of \eqref{thm1eq2*} and the 
identity $\rho(u)+\int_0^u \rho(v) \omega(u-v) dv = 1$, which is \cite[Exercise 297]{Ten}.
The last estimate follows from Lemma \ref{Lem0}, Lemma \ref{LemMobInv} and \eqref{Sum1}.
\end{proof}

Corollary \ref{coriff} shows that upper bounds of the form $x f(y) e^{-Au}$   
translate nicely from $R$ to $\Delta$ and back. 

\begin{corollary}\label{coriff}
Let $A>0$ and $y_0\ge 2$ be fixed. We have   
$$
R(x,y)\ll x f(y) e^{-A u} \qquad (x\ge 1,\ y\ge y_0)
$$
if and only if 
$$
\Delta(x,y)\ll x f(y) e^{-A u} \qquad (x\ge 1, \ y\ge y_0).
$$
\end{corollary}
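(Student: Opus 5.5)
Both implications will come directly from the two identities \eqref{thm1eq1} and \eqref{thm1eq2} of Theorem \ref{thm1}. Each direction reduces to showing that a convolution of the kernel $|\rho'|$ (resp. $|\omega-e^{-\gamma}|$) against $e^{-Av}$ is again $\ll e^{-Au}$. We use the known facts that $\rho$ and $\rho'$ decay faster than any exponential, that $\omega(t)-e^{-\gamma}$ likewise decays superexponentially, and that $\omega(t)=0$ for $t<1$. We also use $\rho(t)=1$ and $\rho'(t)=0$ for $t<0$ (with the conventions of the paper, or the integrand simply vanishes there).

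For ``$R\Rightarrow\Delta$'', apply \eqref{thm1eq1}. The term $R(x,y)$ is already of the right size. In the integral the range is $v\in(-\infty,u]$. For $v\ge 0$ the hypothesis gives $|R(y^v,y)|/y^v\ll f(y)e^{-Av}$, so this part is bounded by
$$
x f(y)\int_0^u e^{-Av}|\rho'(u-v)|\,dv = x f(y)e^{-Au}\int_0^u e^{At}|\rho'(t)|\,dt \ll x f(y)e^{-Au},
$$
since $\int_0^\infty e^{At}|\rho'(t)|dt<\infty$. For $v<0$ we have $y^v<1$, where the hypothesis is not stated. Here I would invoke the bound $|R(z,y)|\le z f(y)\Pi(y)^{-1}$-type statements, or better Lemma \ref{Lem0}, which presumably gives $Q(z,y)=z(\Pi(y)-e^{-\gamma}/\log y+\mu_y)$ or an explicit simple form for $z<1$. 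From this, $R(z,y)$ for $z<1$ would be $z$ times a quantity bounded by $R$-values already controlled, or simply an explicit multiple $z\,c(y)$. Alternatively, one could note that $R(z,y)/z$ is constant in $z\in(0,1)$ and equals its limit from the controlled range. The contribution is then $x\,|c(y)|\int_{-\infty}^0|\rho'(u-v)|dv = x|c(y)|\rho(u)$. Since $\rho(u)\ll e^{-Au}$, we need only $|c(y)|\ll f(y)$. This should follow by comparing with $R(1,y)$, or with the value of $R$ at the smallest $x\ge1$, using Lemma \ref{Lem0}. This handling of $x<1$ is the step I expect to be the main obstacle. I would first try to show $R(z,y)=zR(1^-,y)$-like behaviour on $(0,1)$ from the definitions ($\Phi(z,y)=0$, $1_{z\ge1}=0$, so $Q(z,y)=z\,(\Pi(y)-e^{-\gamma}/\log y+\mu_y(\log z/\log y))$).

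For ``$\Delta\Rightarrow R$'', apply \eqref{thm1eq2}. Here the integral runs over $v\ge0$, i.e. $y^v\ge1$, so the hypothesis applies throughout. Split at $v=u$. For $0\le v\le u$, the bound $|\omega(u-v)-e^{-\gamma}|\ll e^{-2A(u-v)}$ gives a contribution of
$$
x f(y)\int_0^u e^{-Av}e^{-2A(u-v)}dv\ll x f(y)e^{-Au}.
$$
For $v>u$ we have $\omega(u-v)=0$, so the kernel is the constant $-e^{-\gamma}$, and the contribution is $\ll x f(y)\int_u^\infty e^{-Av}dv\ll x f(y)e^{-Au}$. Adding the term $\Delta(x,y)$ completes this direction, and all implied constants depend only on $A$ (and $y_0$).
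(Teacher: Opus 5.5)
Your proposal is correct and follows the paper's own route: the paper deduces the corollary from \eqref{thm1eq1}, \eqref{thm1eq2}, Lemma \ref{Lem0} and the decay bounds $\rho'(u),\ \omega(u)-e^{-\gamma}\ll e^{-(A+1)u}$, exactly as you do. The step you flagged is settled directly by Lemma \ref{Lem0}, which gives $R(z,y)=zR(1,y)$ for all $0<z\le y$; hence the $v<0$ part of \eqref{thm1eq1} equals $-R(1,y)\,x\rho(u)$, and this is $\ll x f(y)e^{-Au}$ by the hypothesis at $x=1$.
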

\begin{proof}
This follows from Theorem \ref{thm1}, Lemma \ref{Lem0} and the estimates \cite[Cor III.5.14 \& Cor III.6.5]{Ten}
$$
\rho'(u), \  \omega(u) -e^{-\gamma} \ll e^{-(A+1)u} \qquad (u\ge 0).
$$
\end{proof}

Define
$$
L_\varepsilon(y) := \exp\left\{ (\log  y)^{3/5 - \varepsilon}\right\}.
$$

\begin{corollary}\label{Cor4}
Let $\varepsilon$, $\delta$, $A$ be positive constants with $e^{A+\delta}\ge 2$.
We have
$$
Q(x,y), \ \Delta(x,y) \ll \frac{x e^{-Au}}{L_\varepsilon(y)} \qquad (x\ge 1,\ y\ge e^{A+\delta}).
$$
\end{corollary}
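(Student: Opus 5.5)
We derive the bound for $Q$ from a known zero-free-region estimate for $\Delta$, then transfer it. The starting point is the classical result of Saias (see \cite[Thm III.5.22]{Ten}, or Saias' paper), which uses the Vinogradov--Korobov zero-free region:
$$\Psi(x,y)=\Lambda(x,y)\bigl(1+O(\exp\{-(\log y)^{3/5-\varepsilon}\})\bigr)$$
uniformly for $x\ge 1$ and $y\ge y_0(\varepsilon)$.

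\textbf{Step 1 (bound for $\Delta$).} Saias' estimate gives $\Delta(x,y)\ll \Lambda(x,y)/L_\varepsilon(y)$. We also have $\Lambda(x,y)\ll x\rho(u)$ in that range, and $\rho(u)\ll_A e^{-(A+1)u}$. Together these yield
$$\Delta(x,y)\ll \frac{x e^{-Au}}{L_\varepsilon(y)}$$
for $y$ large. For bounded $y$ in $[e^{A+\delta},y_0]$ we use the trivial bound $|\Delta(x,y)|\le x$ shown in the proof of Corollary \ref{corFan}. This gives $x\ll x e^{-Au}/L_\varepsilon(y)$ only if $e^{-Au}$ is bounded below, which fails for large $u$. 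So the small-$y$ range needs a separate argument.
\begin{itemize}
\item For fixed $y$ we have $\Psi(x,y)\ll (\log x)^{\pi(y)}$ and $\Lambda(x,y)\ll x\rho(u)+1$.
\item For bounded $y$, the quantity $xe^{-Au}=x^{1-A/\log y}$ is at least $x^{\delta/(A+\delta)}$, since $\log y\ge A+\delta$.
\item $\Psi$ is polylogarithmic and $x\rho(u)$ decays superexponentially in $u$, but the $-\{x\}$ term in $\Lambda$ is only $O(1)$. Since $x^{\delta/(A+\delta)}\ge 1$ for $x\ge1$, all these terms are $\ll x^{\delta/(A+\delta)}$.
\end{itemize}
This is exactly where the hypothesis $y\ge e^{A+\delta}$ enters.

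\textbf{Step 2 (transfer to $R$).} Corollary \ref{coriff} with $f(y)=1/L_\varepsilon(y)$ gives $R(x,y)\ll xe^{-Au}/L_\varepsilon(y)$ for $x\ge1$. For $0<x<1$, Lemma \ref{Lem0} (presumably $Q(x,y)=0$ for $x<1$, hence $R=0$ there) handles the remaining range. In detail, \eqref{thm1eq2} writes
$$R(x,y)=\Delta(x,y)+x\int_0^\infty \frac{\Delta(y^v,y)}{y^v}\bigl(\omega(u-v)-e^{-\gamma}\bigr)\,dv.$$
The integral splits into $v\le u$, where the bound $e^{-Av}e^{-(A+1)(u-v)}$ integrates to $\ll e^{-Au}$, and $v>u$, where $\omega(u-v)-e^{-\gamma}=-e^{-\gamma}$ and $\int_u^\infty e^{-Av}\,dv\ll e^{-Au}$.

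\textbf{Step 3 (Möbius inversion to $Q$).} By Lemma \ref{LemMobInv},
$$|Q(x,y)|\le \sum_{n\in\mathcal S_y}|R(x/n,y)|\ll \frac{x}{L_\varepsilon(y)}\sum_{n\in\mathcal S_y,\,n\le x}\frac{e^{-A(u-\log n/\log y)}}{n}=\frac{xe^{-Au}}{L_\varepsilon(y)}\sum_{n\in \mathcal S_y}n^{A/\log y-1}.$$
Writing $\sigma=1-A/\log y$, the last sum is $\prod_{p\le y}(1-p^{-\sigma})^{-1}$. It converges because $\sigma>0$ follows from $\log y\ge A+\delta>A$. The hard part is that this product is not $O(1)$: with $\sigma=1-A/\log y$ it is $\asymp \log y$ (Mertens-type, since $p^{A/\log y}\le e^A$). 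That costs a factor $\log y$, which is harmless because $\log y/L_\varepsilon(y)\ll 1/L_{\varepsilon/2}(y)$. So we prove the result with $\varepsilon/2$ and rename $\varepsilon$. Near $\log y=A+\delta$ the product is bounded in terms of $\delta$, so the implied constant depends on $\delta$.
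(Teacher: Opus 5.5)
Your argument runs in the opposite direction to the paper's. The paper goes $Q\to R\to\Delta$: it starts from Tenenbaum's Theorem III.6.10 via Lemma~\ref{PhiLem}, then uses Lemma~\ref{LemRQ} and Corollary~\ref{coriff}. You go $\Delta\to R\to Q$ starting from Saias. That reversal would be fine, but Step~1 rests on a misquotation.

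Saias' theorem is \emph{not} uniform in $x\ge 1$, $y\ge y_0(\varepsilon)$. It holds only in the domain \eqref{Hdomain}, $\exp\{(\log\log x)^{5/3+\varepsilon}\}\le y\le x$. It genuinely fails when $y$ is small relative to $x$: for fixed $y$, \eqref{LambdaDef} gives $\Lambda(x,y)=O_y(1)$, while $\Psi(x,y)\asymp_y(\log x)^{\pi(y)}\to\infty$. So nothing you wrote covers the region where $y$ is large but $\log y<(\log\log x)^{5/3+\varepsilon}$. Your bounded-$y$ patch cannot be stretched to reach it, because $\Psi\ll(\log x)^{\pi(y)}$ and $\Lambda\ll x\rho(u)+1$ have constants that depend badly on $y$.

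The missing step is the one the paper performs for $Q$ in Lemma~\ref{PhiLem} and mentions for $\Delta$ after the corollary. For $y\le x$ outside \eqref{Hdomain} one has $\log\log x>(\log y)^{3/(5+3\varepsilon)}$. So $u$ is so large that $L_\varepsilon(y)\ll e^{\delta u/2}$, and it suffices to show $\Psi(x,y),\Lambda(x,y)\ll xe^{-(A+\delta/2)u}$.
\begin{itemize}
\item The bound for $\Psi$ is Lemma~\ref{PsiLem} with $(A+\delta/2,\delta/2)$ in place of $(A,\delta)$.
\item For $\Lambda$, use \eqref{LambdaDef} with $\rho(w),|\rho'(w)|\ll e^{-(A+\delta)w}$ and $y^{-v}\le e^{-(A+\delta)v}$. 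This gives $|\Lambda(x,y)|\le 1+O\bigl(x(1+u)e^{-(A+\delta)u}\bigr)$. Then note $xe^{-(A+\delta/2)u}=x^{1-(A+\delta/2)/\log y}\ge 1$.
\end{itemize}
These bounds hold for all $x\ge1$, $y\ge e^{A+\delta}$, so they also replace your bounded-$y$ argument. For $1\le x\le y$ one has $\Delta(x,y)=0$ outright.

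With that repair, Steps~2--3 work, apart from three small slips:
\begin{itemize}
\item By Lemma~\ref{Lem0}, $R(x,y)=-\beta_y x\neq 0$ for $0<x<1$; it is $R^*$ that vanishes there. So restricting the sum to $n\le x$ is unjustified. Your final unrestricted sum is still correct, because $|\beta_y|=|R(1,y)|\ll 1/L_\varepsilon(y)$ covers $0<x<1$, exactly as in the proof of Lemma~\ref{LemRQ}.
\item The comparison is written backwards; it should read $\log y/L_{\varepsilon/2}(y)\ll 1/L_\varepsilon(y)$.
\item The bound $p^{A/\log y}\le e^A$ by itself only gives $(\log y)^{e^A}$ for the Euler product, not $\log y$. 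This is still enough for the argument.
\end{itemize}
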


\begin{proof}
The upper bound for $Q(x,y) $ is Lemma \ref{PhiLem}, which is a consequence of \cite[Thm III.6.10]{Ten}. 
Applying this estimate with $\varepsilon$ replaced by 
$\varepsilon/2$, Lemma \ref{LemRQ} implies $R(x,y)\ll x e^{-Au}L_\varepsilon(y)^{-1} $. 
Corollary \ref{coriff} shows that the same estimate holds for $\Delta(x,y)$.
\end{proof}

De Bruijn \cite{BruI} established the bound $\Delta(x,y)\ll \frac{x u^2}{L_\varepsilon(y)}$ for $x\ge y\ge 2$. 
In the domain
\begin{equation}\label{Hdomain}
x\ge 3, \qquad y\ge 2, \qquad \exp\{ (\log\log x)^{5/3+\varepsilon}\} \le y \le x,
\end{equation}
we have the sharper estimates
\begin{equation}\label{QDrho}
Q(x,y), \ \Delta(x,y)\ll \frac{x \rho(u)}{L_\varepsilon(y)},
\end{equation}
due to Tenenbaum \cite[Thm III.6.10 \& Eq III.6.61]{Ten} for $Q$, and Saias \cite{Saias} for $\Delta$.
We note that the result for $\Delta$ in Corollary \ref{Cor4} can also be derived directly from \eqref{QDrho} 
in the domain \eqref{Hdomain}, and from \eqref{LambdaDef} and Lemma \ref{PsiLem} outside of \eqref{Hdomain}.

\section{Lemmas needed to derive the corollaries from Theorem \ref{thm1}}\label{SecLemmas}

\begin{lemma}\label{Lem0}
For $0< x \le y$, $y\ge 2$, we have
$$
Q^*(x,y)=0, \quad R^*(x,y)=0,
$$
$$
Q(x,y) = x\left( \Pi(y) - \frac{e^{-\gamma}}{\log y} \right), 
$$
$$
R(x,y) =  x\left( 1 - \frac{e^{-\gamma}}{\log y}\Pi(y)^{-1} \right) .
$$
\end{lemma}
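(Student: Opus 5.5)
We verify the four identities directly from the definitions, treating $Q$ and $Q^*$ first and then summing over $\mathcal{S}_y$ to get $R$ and $R^*$.

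\textbf{Computing $\Phi$, $V^*$ and $V$.} For $0<x<1$ we have $\Phi(x,y)=0$, since no $n\ge1$ satisfies $n\le x$. For $1\le x\le y$ we have $\Phi(x,y)=1$: the only admissible $n\le x\le y$ is $n=1$, because any $n>1$ with $n\le y$ has a prime factor $\le y$. Hence $\Phi(x,y)=1_{x\ge1}$ for all $0<x\le y$.

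Next, $u=\log x/\log y\le 1$ in this range, so for $0\le v\le u$ we have $u-v<1$. Since $\omega(t)=0$ for $t<1$, it follows that $\mu_y(u)=0$. (The point $u-v=1$ occurs only when $u=1$, $v=0$, which is a single point and does not affect the integral.) For $u\le 0$ the integral is empty or trivially zero.

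Therefore $V^*(x,y)=1_{x\ge1}$ and $V(x,y)=1_{x\ge1}+x(\Pi(y)-e^{-\gamma}/\log y)$. Subtracting $\Phi(x,y)=1_{x\ge1}$ gives $Q^*(x,y)=0$ and $Q(x,y)=x(\Pi(y)-e^{-\gamma}/\log y)$, as claimed.

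\textbf{Summing to get $R$ and $R^*$.} For $n\in\mathcal{S}_y$ we have $x/n\le x\le y$, so the formulas above apply to each term $Q(x/n,y)$ and $Q^*(x/n,y)$. This gives $R^*(x,y)=0$ immediately. For $R$,
$$R(x,y)=x\left(\Pi(y)-\frac{e^{-\gamma}}{\log y}\right)\sum_{n\in\mathcal{S}_y}\frac1n.$$
The sum converges and equals $\Pi(y)^{-1}$ by the Euler product over the finitely many primes $p\le y$, which is \eqref{Sum1}. This yields $R(x,y)=x\left(1-\frac{e^{-\gamma}}{\log y}\Pi(y)^{-1}\right)$.

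The only point needing care is the support of $\omega$. Here $\omega$ vanishes on $u<1$ (by convention, extending Buchstab's function by zero there), and the possible boundary value at $u=1$ contributes nothing to the integral.
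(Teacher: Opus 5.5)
Your proof is correct and follows the paper's argument. You use $\Phi(x,y)=1_{x\ge 1}$ and $\mu_y(u)=0$ for $0<x\le y$ to obtain $Q$ and $Q^*$, then sum over $n\in\mathcal{S}_y$ (since $x/n\le y$) and apply \eqref{Sum1}, spelling out the details on the support of $\omega$ and the case $u<0$ that the paper leaves implicit.
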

\begin{proof}
The first three statements follow from $\Phi(x,y)=1_{x\ge 1} $ and $\mu_y(u)=0$ when $0<x\le y$.
The last claim now follows from  \eqref{Sum1}. 
\end{proof}

\begin{lemma}\label{LemMobInv}
For $x> 0$, $y\ge 2$, we have
\begin{equation*}
Q(x,y)= \sum_{n\in S_y} \mu(n) R(x/n,y), \quad Q^*(x,y)= \sum_{n\in S_y} \mu(n) R^*(x/n,y).
\end{equation*}
\end{lemma}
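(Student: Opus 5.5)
We prove the Möbius inversion directly from the definition $R(x,y)=\sum_{m\in\mathcal{S}_y} Q(x/m,y)$. For $x>0$, $y\ge 2$, substituting this definition into the right-hand side gives
$$
\sum_{n\in\mathcal{S}_y}\mu(n)R(x/n,y)=\sum_{n\in\mathcal{S}_y}\mu(n)\sum_{m\in\mathcal{S}_y}Q\Bigl(\frac{x}{nm},y\Bigr).
$$
We then group the terms by $k=nm$. Since $\mathcal{S}_y$ is closed under multiplication and under taking divisors, the pairs $(n,m)\in\mathcal{S}_y^2$ with $nm=k$ are exactly the factorizations $n\mid k$ of some $k\in\mathcal{S}_y$. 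The right-hand side therefore becomes
$$
\sum_{k\in\mathcal{S}_y}Q(x/k,y)\sum_{n\mid k}\mu(n)=Q(x,y),
$$
which is the claim. The argument for $Q^*$ and $R^*$ is word for word the same.

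The only real step is justifying the rearrangement, and for that we need absolute convergence of the double sum
$$
\sum_{n,m\in\mathcal{S}_y}\Bigl|Q\Bigl(\frac{x}{nm},y\Bigr)\Bigr| \le \sum_{k\in\mathcal{S}_y}\tau(k)\,|Q(x/k,y)|.
$$
We split this sum according to the size of $x/k$.

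\emph{Case $x/k<1$.} Here $Q(x/k,y)=x/k\cdot(\Pi(y)-e^{-\gamma}/\log y)$ by Lemma \ref{Lem0}, since $0<x/k\le y$. Then $|Q(x/k,y)|\le Cx/k$ with $C=C(y)$. For $Q^*$ these terms vanish outright.

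\emph{Case $1\le x/k$.} There are only finitely many such $k$, since $k\le x$, so these terms contribute a finite amount.

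The convergence therefore reduces to bounding $\sum_{k\in\mathcal{S}_y}\tau(k)/k$. This sum equals $\prod_{p\le y}(1-1/p)^{-2}<\infty$, because the product runs over finitely many primes. The same bound also shows that the series defining $R$ converges absolutely, which we use implicitly.

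I expect no serious obstacle. The one point needing care is the absolute convergence above, and it is handled by Lemma \ref{Lem0} together with the finiteness of the set of primes up to $y$.
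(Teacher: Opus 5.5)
Your proposal is correct and follows the paper's proof. You substitute the definition of $R$, regroup by $k=nm$, use that $\sum_{n\mid k}\mu(n)$ vanishes unless $k=1$, and justify the rearrangement by absolute convergence via Lemma \ref{Lem0}; your explicit bound through $\sum_{k\in\mathcal{S}_y}\tau(k)/k=\Pi(y)^{-2}$ simply fills in the convergence detail that the paper leaves implicit.
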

\begin{proof}
Writing $k=mn$, the first sum is
$$
\sum_{n\in S_y} \mu(n)\sum_{m\in S_y} Q(x/nm,y)
=\sum_{k\in S_y}  Q(x/k,y) \sum_{n|k} \mu(n) = Q(x,y).
$$
Changing the order of summation is justified because the double sum converges absolutely, by Lemma \ref{Lem0}.
The second claim follows likewise, with $Q^*$ replacing $Q$. 
\end{proof}

\begin{lemma}\label{PsiLem}
Let $A>0$ and $\delta>0$ be fixed. We have
$$
\Psi(x,y) \ll x e^{-Au} \qquad (x\ge 1, \ y\ge e^{A+\delta}).
$$
\end{lemma}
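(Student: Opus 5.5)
The plan is to reduce to the classical Rankin bound together with the elementary range. Put $\alpha := 1 - A/\log y$. Since $y \ge e^{A+\delta}$, we have $\log y \ge A+\delta$, so $\alpha \ge \delta/(A+\delta) > 0$. Rankin's trick gives
$$
\Psi(x,y) \le \sum_{n\in\mathcal{S}_y} \Bigl(\frac{x}{n}\Bigr)^{\alpha} = x^{\alpha} \prod_{p\le y} \bigl(1-p^{-\alpha}\bigr)^{-1}.
$$
Because $x^{\alpha} = x\, y^{-Au/\log y \cdot \log y/\log y}$, more simply $x^{\alpha} = x\,e^{-A\log x/\log y} = x e^{-Au}$, it suffices to show that the Euler product is bounded by a constant depending only on $A$ and $\delta$.

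To bound the product, write $p^{-\alpha} = p^{-1}p^{A/\log y}$. For $p \le y$ we have $p^{A/\log y} \le e^{A}$, and moreover
$$
p^{A/\log y} = \exp\Bigl(A\frac{\log p}{\log y}\Bigr) \le 1 + C_A\frac{\log p}{\log y}.
$$
Taking logarithms of the product then gives
$$
\sum_{p\le y} -\log\bigl(1-p^{-\alpha}\bigr) = \sum_{p\le y} p^{-\alpha} + O\Bigl(\sum_p p^{-2\alpha}\Bigr).
$$
The first sum is $\sum_{p \le y} 1/p + O\bigl(\sum_{p\le y} \log p/(p\log y)\bigr) = \log\log y + O_A(1)$ by the Mertens estimates. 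The product is therefore $\ll \log y$, not $O(1)$, and this naive Rankin bound loses a factor $\log y$.

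This is the main obstacle, and I would fix it by absorbing the loss through a slightly larger exponent. Apply the argument with $A$ replaced by $A' = A+\eta$, where $\eta$ is chosen so that the condition $y \ge e^{A'+\delta/2}$ still holds; taking $\eta = \delta/2$ works, and then $\alpha$ stays bounded below by roughly $\delta/(2(A+\delta))$. This gives
$$
\Psi(x,y) \ll x e^{-(A+\eta)u}\log y.
$$
We then need $e^{-\eta u}\log y \ll 1$, which holds when $u \ge (\log\log y)/\eta$. For smaller $u$, that is $x \le y^{(\log\log y)/\eta}$, this is not automatic, so instead treat that range differently. The bound $\Psi(x,y) \ll x\rho(u) + x/\log y$, or simply the convergent-series version of Rankin, gives
$$
\Psi(x,y) \ll x\,\rho(u)\,(\text{bounded factor}) + x e^{-cu\log u} + \dots
$$
For $u$ in a bounded range we can use the trivial bound $\Psi \le x \ll x e^{-Au}$.

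The real difficulty is the intermediate range $1 \ll u \le (\log\log y)/\eta$, and the cleaner route there is the standard estimate $\Psi(x,y) \ll x e^{-u/2}$, or more generally $\Psi(x,y) \ll x e^{-Bu}$ for $y \ge y_0(B)$ (Tenenbaum III.5). Alternatively, one can refine Rankin by choosing $\alpha = 1 - \xi(u)/\log y$ with the saddle-point value; this gives $\Psi \ll x\rho(u)e^{O(u)}$-type bounds and hence $\Psi \ll x e^{-Au}$ once $\log y$ is large compared with $A$. The condition $y \ge e^{A+\delta}$ only guarantees $\alpha$ bounded away from $0$, and for bounded $y$ all quantities are $O_{A,\delta}(1)$ in the product anyway. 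If the $\log y$ loss cannot be eliminated uniformly, I would instead cite \cite[Thm III.5.1]{Ten} and the standard Rankin bound and adjust constants.
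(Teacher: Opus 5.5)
There is a genuine gap, exactly where you locate ``the real difficulty''. After replacing $A$ by $A+\eta$ with $\eta=\delta/2$, Rankin gives $\Psi(x,y)\ll xe^{-Au}\cdot e^{-\eta u}\log y$. This is acceptable only for $u\ge (2/\delta)\log\log y$. For $y$ large and $1\ll u<(2/\delta)\log\log y$, none of your proposed remedies works:
\begin{enumerate}[(i)]
\item The bound $\Psi(x,y)\ll xe^{-u/2}$ is too weak once $A>1/2$.
\item ``$\Psi(x,y)\ll xe^{-Bu}$ for $y\ge y_0(B)$'' is the lemma itself, so citing it is circular.
\item The saddle-point choice $\alpha=1-\xi(u)/\log y$ does not give $x\rho(u)e^{O(u)}$. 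It only improves the factor $x^\alpha$. For every $\alpha\le 1$ the Euler product satisfies $\prod_{p\le y}(1-p^{-\alpha})^{-1}\ge \Pi(y)^{-1}\gg\log y$, and for $\alpha>1$ the Rankin bound exceeds $x$. So Rankin's inequality applied to $\Psi(x,y)$ always carries the factor $\log y$. When $u\log u$ is small compared with $\log\log y$, it is even worse than the trivial bound, while the target $xe^{-Au}$ is $o(x)$ there.
\item The estimate $\Psi(x,y)=x\rho(u)+O(x/\log y)$, which you mention in passing, is the right kind of input. But $x/\log y\le xe^{-Au}$ holds only for $u\le(\log\log y)/A$. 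With your $\eta=\delta/2$, the window $(\log\log y)/A<u<(2/\delta)\log\log y$ stays uncovered whenever $\delta<2A$.
\end{enumerate}

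The missing step is to make the two ranges overlap. Take $\eta=A$. For $y\ge Y_0(A,\delta)$, Rankin with $\alpha=1-2A/\log y$ gives $\Psi(x,y)\ll_A xe^{-2Au}\log y\le xe^{-Au}$ as soon as $u\ge(\log\log y)/A$. For $1\le u<(\log\log y)/A$, the uniform estimate $\Psi(x,y)=x\rho(u)+O(x/\log y)$ $(x\ge y\ge 2)$ finishes the job, together with $\rho(u)\le 1/\Gamma(u+1)\ll_A e^{-Au}$ and $1/\log y<e^{-Au}$. For $e^{A+\delta}\le y<Y_0$, your first Rankin bound already suffices: there $\alpha=1-A/\log y\ge\delta/(A+\delta)$ and the Euler product has boundedly many factors. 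The case $x<y$ is trivial. Alternatively, you can keep $\eta=\delta/2$ and use Hildebrand's $\Psi(x,y)\sim x\rho(u)$, whose range $y\ge\exp\{(\log\log x)^{5/3+\varepsilon}\}$ contains $u\le(2/\delta)\log\log y$ once $y$ is large. The paper's proof is of this kind: it quotes ready-made estimates from Tenenbaum's book (Thm III.5.2 and Eq.~III.5.35) and splits into cases according to the size of $y$ relative to $x$, rather than relying on Rankin's inequality alone.
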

\begin{proof}
This follows from \cite[Thm III.5.2 \& Eq III.5.35]{Ten}, by considering several cases depending on the size of $y$ relative to $x$. 
\end{proof}

\begin{lemma}\label{LemRQ}
Let $A>0$ and $\delta>0$ be fixed. If
$$
Q(x,y) \ll x f(y) e^{-A u} \qquad (x\ge 1,\  y\ge e^{A+\delta})
$$
then
$$
R(x,y)  \ll x f(y) e^{-A u}  \log y \qquad (x\ge 1,\  y\ge e^{A+\delta}).
$$
 If
$$
R(x,y) \ll x f(y) e^{-A u} \qquad (x\ge 1,\  y\ge e^{A+\delta})
$$
then
$$
Q(x,y)  \ll x f(y) e^{-A u}  \log y \qquad (x\ge 1,\  y\ge e^{A+\delta}).
$$
\end{lemma}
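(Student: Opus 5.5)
The plan is to use the definitions $R(x,y)=\sum_{n\in\mathcal S_y}Q(x/n,y)$ and, for the converse, Lemma \ref{LemMobInv}, $Q(x,y)=\sum_{n\in\mathcal S_y}\mu(n)R(x/n,y)$. In both directions we split the sum according to whether $x/n\ge 1$ or $x/n<1$. The hypothesis only controls $Q$ or $R$ at arguments $\ge 1$, so the range $x/n<1$ has to be handled separately by Lemma \ref{Lem0}. Throughout, set $u_n:=\log(x/n)/\log y=u-\log n/\log y$, so that $e^{-Au_n}=e^{-Au}n^{A/\log y}$.

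\textbf{Range $n\le x$.} The hypothesis gives
\[
\sum_{n\in\mathcal S_y,\,n\le x}|Q(x/n,y)|\ll x f(y)e^{-Au}\sum_{n\in\mathcal S_y}n^{-1+A/\log y}.
\]
Since $y\ge e^{A+\delta}$, we have $\sigma:=1-A/\log y\ge 1-A/(A+\delta)>0$. The Euler product is
\[
\sum_{n\in\mathcal S_y}n^{-\sigma}=\prod_{p\le y}(1-p^{-\sigma})^{-1}.
\]
For $p\le y$ we have $p^{1-\sigma}\le y^{A/\log y}=e^A$, hence $p^{-\sigma}\le e^A/p$. This gives $\log\prod_{p\le y}(1-p^{-\sigma})^{-1}=\sum_{p\le y}p^{-\sigma}+O(1)$. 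We can bound $\sum_{p\le y}p^{-\sigma}\le\sum_{p\le y}p^{-1}+\sum_{p\le y}p^{-1}(p^{A/\log y}-1)$. The second sum is $\ll \sum_{p\le y}\frac{\log p}{p\log y}e^{A}\ll_A 1$, by Mertens' theorem. So the product is $\ll \log y$, uniformly, which yields the factor $\log y$.

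The bounds must be uniform down to $y=e^{A+\delta}$, since $\sigma$ can then be close to $A/(A+\delta)$ rather than to $1$. In that regime we estimate the product crudely: the Euler factors are finitely many and bounded in terms of $A,\delta$, which is fine because $y$ is then bounded. I expect this uniformity to be the main (modest) obstacle. The clean way around it is to fix $y_1=y_1(A)$, use the Mertens argument for $y\ge y_1$, and for $e^{A+\delta}\le y<y_1$ note that everything is $O_{A,\delta}(1)$.

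\textbf{Range $n>x$.} Here $0<x/n<1\le y$, and Lemma \ref{Lem0} gives $Q(x/n,y)=(x/n)\,c(y)$ with $c(y)=\Pi(y)-e^{-\gamma}/\log y$. For the $R$ direction the analogous formula is $R(x/n,y)=(x/n)\,c'(y)$. The contribution of these terms is therefore $x\,c(y)\sum_{n\in\mathcal S_y,\,n>x}1/n$, and it has to be bounded by $x f(y)e^{-Au}\log y$. To do this without extra information, I would apply the hypothesis at the admissible point $x=1$ (where $u=0$): by Lemma \ref{Lem0}, $Q(1,y)=c(y)$, so $|c(y)|\ll f(y)$. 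Similarly, $R(1,y)=c'(y)$ gives $|c'(y)|\ll f(y)$ in the converse.

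It then remains to show
\[
\sum_{n\in\mathcal S_y,\,n>x}\frac1n\ll e^{-Au}\log y.
\]
We use Rankin's trick: $\sum_{n>x}n^{-1}\le x^{-A/\log y}\sum_{n\in\mathcal S_y}n^{-1+A/\log y}=e^{-Au}\sum_{n\in\mathcal S_y}n^{-\sigma}$. This is again $\ll e^{-Au}\log y$ by the Euler-product bound above.

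\textbf{Converse.} The converse is identical. The Möbius factors $\mu(n)$ are simply bounded by $1$ in absolute value, the hypothesis on $R$ at arguments $\ge1$ replaces the one on $Q$, and Lemma \ref{Lem0} supplies the formula for $R$ at arguments $<1$.
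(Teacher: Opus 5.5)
Your argument is correct, and its skeleton matches the paper's. You bound $R(x,y)=\sum_{n\in\mathcal{S}_y}Q(x/n,y)$ termwise. You treat arguments $x/n<1$ through Lemma \ref{Lem0} and the hypothesis at $x=1$. You then reduce everything to $\sum_{n\in\mathcal{S}_y}n^{-1+A/\log y}\ll\log y$. Your split at $n=x$, with Rankin's trick on the tail, is the paper's remark that the hypothesis persists for $0<x<1$ (since $e^{-Au}\ge 1$ there), just written out.

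The genuine difference is in how you bound that sum. The paper uses Abel summation together with the smooth-number estimate of Lemma \ref{PsiLem}, applied with $(A+\delta/2,\delta/2)$ in place of $(A,\delta)$. This gives $\Psi(t,y)\ll t^{1-(A+\delta/2)/\log y}$, so the Abel integral is $\ll\int_1^\infty t^{-1-\delta/(2\log y)}\,dt=2\delta^{-1}\log y$, uniformly for $y\ge e^{A+\delta}$ with no case distinction. You instead write the sum as the finite Euler product $\prod_{p\le y}(1-p^{-\sigma})^{-1}$ and control it with Mertens' theorem.

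Your route is more elementary and self-contained; it is essentially the Rankin computation that underlies upper bounds such as Lemma \ref{PsiLem} in the first place. The cost is that you must treat bounded $y$ separately, because $1-p^{-\sigma}$ is then only bounded below in terms of $A$ and $\delta$, and you handle this correctly. For the record, the $O(1)$ in $\log\prod_{p\le y}(1-p^{-\sigma})^{-1}=\sum_{p\le y}p^{-\sigma}+O(1)$ comes from $\sum_{p}p^{-2\sigma}/(1-p^{-\sigma})\le(1-2^{-\sigma})^{-1}\sum_p\min(1,e^{2A}p^{-2})$. That step uses both $p^{-\sigma}\le e^A/p$ and $\sigma\ge\delta/(A+\delta)$.

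The paper's route is shorter only because Lemma \ref{PsiLem} is already needed elsewhere in the paper, for instance in Lemma \ref{PhiLem}.
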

\begin{proof}
Assume the first upper bound for $Q(x,y)$. When $0<x<1$, this bound still holds, 
by Lemma \ref{Lem0} and since it holds when $x=1$. Thus 
$$
\frac{R(x,y)}{x f(y)} \ll \sum_{n\in \mathcal{S}_y} \frac{ \exp(-A\log(x/n)/\log y)}{n}
= e^{-Au} \sum_{n\in \mathcal{S}_y} \frac{\exp(A\log(n)/\log y)}{n }.
$$
Abel summation and Lemma \ref{PsiLem}, with $(A+\delta/2,\delta/2)$ in place of $(A,\delta)$, 
shows that the last sum is $\ll \log y$, which implies the desired estimate for $R(x,y)$. 
The second part is proved similarly, with the help of Lemma \ref{LemMobInv}.
\end{proof}

\begin{lemma}\label{PhiLem}
Let $\varepsilon$, $\delta$, $A$ be positive constants with $e^{A+\delta}\ge 2$. We have
$$
Q(x,y)\ll \frac{x e^{-A u}}{L_\varepsilon(y)}\qquad (x\ge 1, \ y\ge e^{A+\delta}).
$$
\end{lemma}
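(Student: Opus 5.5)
The bound will be transferred from the domain \eqref{Hdomain}, where Tenenbaum's estimate \eqref{QDrho} is available, and handled by trivial estimates outside it. The case $0<x\le y$ needs no work: by Lemma \ref{Lem0}, $Q(x,y)=x(\Pi(y)-e^{-\gamma}/\log y)$. Mertens' theorem with the classical error term gives $\Pi(y)-e^{-\gamma}/\log y\ll 1/L_{\varepsilon}(y)$, and $e^{-Au}\ge e^{-A}$ when $u\le 1$, so the claim holds there. For $1\le x\le y$ this is exactly Lemma \ref{Lem0}. For $x\ge y$ I write
$$Q(x,y)=\bigl(V^*(x,y)-\Phi(x,y)\bigr)+x\bigl(\Pi(y)-e^{-\gamma}/\log y\bigr).$$
The second piece is $\ll x/(\log y\,L_\varepsilon(y))$ by the same Mertens estimate. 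That is acceptable only when $u$ is bounded in terms of $\log L_\varepsilon(y)$, so for large $u$ I will not split, but instead bound $Q$ directly.

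\emph{Case 1: $(x,y)$ in \eqref{Hdomain}.} Here \eqref{QDrho} gives $Q\ll x\rho(u)/L_\varepsilon(y)$. Since $\rho(u)=\exp\{-(1+o(1))u\log u\}\ll_A e^{-Au}$, the claim follows at once.

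\emph{Case 2: $y<\exp\{(\log\log x)^{5/3+\varepsilon}\}$, i.e.\ $u$ huge compared with $\log y$.} Now $\log x>\exp\{(\log y)^{1/(5/3+\varepsilon)}\}$, so
$$u\ge \exp\{(\log y)^{3/5-\varepsilon'}\}/\log y,$$
which means $L_\varepsilon(y)\le e^{\eta u}$ for any fixed $\eta>0$ once $y$ is large, and for bounded $y$ trivially. It therefore suffices to show $Q(x,y)\ll x e^{-(A+\eta)u}$ in this range, and I bound each term of $Q$ separately.

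First, $V(x,y)-1_{x\ge1}=x(\Pi(y)-e^{-\gamma}/\log y+\mu_y(u))$. De Bruijn's representation shows that $V$ is essentially $x\int\omega(u-v)y^{-v}dv$ combined with constants arranged so that the $e^{-\gamma}$ limit cancels. Concretely, $\mu_y(u)=\int_0^u\omega(u-v)y^{-v}dv$, and $\int_0^\infty y^{-v}dv=1/\log y$, so
$$\mu_y(u)-\frac{e^{-\gamma}}{\log y}=\int_0^u(\omega(u-v)-e^{-\gamma})y^{-v}dv-e^{-\gamma}\frac{y^{-u}}{\log y}.$$
This is $\ll e^{-(A+1)u}$ for $y\ge e^{A+\delta}$, using $\omega(w)-e^{-\gamma}\ll e^{-(A+2)w}$ and $y^{-v}\le e^{-(A+\delta)v}$. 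That leaves the term $\Pi(y)$, which does not decay in $u$. Hence I must show that $\Phi(x,y)$ itself equals $x\Pi(y)+O(xe^{-(A+\eta)u})$ in this range. This is precisely the content of \cite[Thm III.6.10]{Ten} outside \eqref{Hdomain}, or equivalently it follows by writing $\Phi(x,y)=\sum_{d\mid P(y)}\mu(d)\lfloor x/d\rfloor$ and using Rankin's trick. The $\lfloor\cdot\rfloor$ errors are controlled by
$$\sum_{d\le x,\ d\mid P(y)}1\le\Psi(x,y)\ll xe^{-(A+1)u}$$
via Lemma \ref{PsiLem}, and the tail $\sum_{d>x,\ d\mid P(y)}x/d$ is handled by Abel summation with Lemma \ref{PsiLem} again, where the requirement $y\ge e^{A+\delta}$ enters.

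\emph{Main obstacle.} The hardest step is Case 2: getting exponential decay $e^{-(A+\eta)u}$ for $\Phi(x,y)-x\Pi(y)$ uniformly down to $y\ge e^{A+\delta}$. The tail estimate via Lemma \ref{PsiLem} should work after shrinking $\delta$.
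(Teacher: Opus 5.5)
Your proof is correct, and its skeleton matches the paper's. For $x\le y$ you use Lemma \ref{Lem0} plus the prime number theorem. Inside \eqref{Hdomain} you use Tenenbaum's estimate. Outside \eqref{Hdomain} you observe that $L_\varepsilon(y)\ll e^{\eta u}$, so it suffices to show $Q(x,y)\ll x e^{-(A+\eta)u}$ for some small $\eta>0$.

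The two routes differ only in this last step. The paper quotes Tenenbaum (Eq.\ III.6.52 divided by $\Pi(y)e^{\gamma}\log y$, combined with Thm.\ III.6.2) to get the clean bound $Q(x,y)\ll\Psi(x,y)$. It then applies Lemma \ref{PsiLem} with $(A+\delta/2,\delta/2)$. You instead split $Q$ into three pieces: $1_{x\ge 1}$, $x(\mu_y(u)-e^{-\gamma}/\log y)$ and $x\Pi(y)-\Phi(x,y)$. You control the middle piece by the superexponential decay of $\omega-e^{-\gamma}$, and the last by Legendre's formula and Rankin's trick. This is more elementary and self-contained, but weaker. Your Rankin/Abel bound for the tail $x\sum_{d>x}1/d$, over squarefree $d\in\mathcal{S}_y$, is only $\ll x e^{-(A+\delta/2)u}\log y$. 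So you get $x e^{-(A+\delta/2)u}\log y$ rather than $\Psi(x,y)$. This is harmless precisely because outside \eqref{Hdomain} $u$ exceeds every fixed power of $\log y$, so $\log y\ll e^{\delta u/4}$. Your write-up never mentions this $\log y$ loss, so you should say this explicitly.

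Two slips need fixing.

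First, you only have $y\ge e^{A+\delta}$. So Lemma \ref{PsiLem} can be applied with exponent $A+\delta/2$, but not $A+1$. Indeed, for fixed $y=e^{A+\delta}$ with $\delta<1$, the bound $\Psi(x,y)\ll xe^{-(A+1)u}$ is false as $x\to\infty$, since the right side tends to $0$. Likewise, your estimate for $\mu_y(u)-e^{-\gamma}/\log y$ gives $e^{-(A+\delta)u}$, not $e^{-(A+1)u}$, when $\delta<1$. Since any exponent $A+\eta$ with $0<\eta<\delta$ suffices, nothing breaks.

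Second, the estimate $\Pi(y)-e^{-\gamma}/\log y\ll L_\varepsilon(y)^{-1}$ needs the Vinogradov--Korobov error term in the prime number theorem. The classical de la Vall\'ee Poussin error $\exp\{-c\sqrt{\log y}\}$ is not strong enough once $\varepsilon<1/10$, because then $3/5-\varepsilon>1/2$.
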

\begin{proof}
If $1\le x\le y$, the result follows from Lemma \ref{Lem0} and
 an application of the prime number theorem.

If $x\ge y\ge e^{A+\delta}$, then the proof of \cite[Thm III.6.10]{Ten} shows that in the domain \eqref{Hdomain},
  $Q(x,y) \ll \Psi(x,y) L_\varepsilon(y)^{-1} \ll x e^{-Au}  L_\varepsilon(y)^{-1}$, by Lemma \ref{PsiLem}.

Outside of \eqref{Hdomain}, we have  $L_\varepsilon(y)\ll e^{u \delta/2}$. 
Dividing \cite[Eq III.6.52]{Ten} by $\Pi(y) e^\gamma \log y$ and combining the result 
with \cite[Thm III.6.2]{Ten} shows that in this case we have
$Q(x,y) \ll \Psi(x,y) \ll x e^{-(A+\delta/2)u} \ll  x e^{-Au}  L_\varepsilon(y)^{-1}$, by Lemma \ref{PsiLem},
with $(A+\delta/2,\delta/2)$ in place of $(A,\delta)$.
\end{proof}

\section{Proof of Theorem \ref{thm1}}\label{SecProof}

Let
$$
\alpha_y:=\frac{e^{-\gamma} }{\log y}\Pi(y)^{-1}, \qquad \beta_y  :=   \alpha_y -1.
$$
Note that 
\begin{equation}\label{RReq}
R^*(x,y)=R(x,y)+\beta_y x.
\end{equation}

\begin{lemma}\label{Lem1}
For $x\ge 0$, $y\ge 2$,
$$
[x] = \sum_{n \in \mathcal{S}_y} \Phi(x/n, y).
$$
\end{lemma}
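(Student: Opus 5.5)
The plan is to prove the identity by unique factorization. Every integer $1\le m\le x$ factors uniquely as $m=nk$, where $n\in\mathcal{S}_y$ is its $y$-smooth part and $k$ has all prime factors $>y$. Hence
$$
[x]=\sum_{1\le m\le x}1=\sum_{n\in\mathcal{S}_y}\bigl|\{1\le k\le x/n:\ p\mid k\Rightarrow p>y\}\bigr|=\sum_{n\in\mathcal{S}_y}\Phi(x/n,y).
$$

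I will carry this out in the following order.

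First, fix $x\ge0$ and $y\ge2$. If $x<1$, both sides vanish. On the right this is because $\Phi(t,y)=0$ for $t<1$, as the set counted is empty; and for $n\ge1$ we have $x/n\le x<1$.

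Second, for $x\ge1$, I define the map $m\mapsto(n,k)$. Here $n=\prod_{p\le y}p^{v_p(m)}\in\mathcal{S}_y$ and $k=m/n$, so every prime factor of $k$ exceeds $y$. The inverse map is $(n,k)\mapsto nk$. The constraint $nk\le x$ is equivalent to $k\le x/n$, so the map is a bijection between $\{1\le m\le x\}$ and the set of pairs $(n,k)$ with $n\in\mathcal{S}_y$ and $k$ a $y$-rough integer in $[1,x/n]$. Note that $k=1$ counts as $y$-rough (vacuously), which matches the definition of $\Phi$, where $\Phi(t,y)=1$ for $1\le t\le y$, consistent with Lemma~\ref{Lem0}.

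Third, I group the pairs by $n$. The sum over $n\in\mathcal{S}_y$ is in effect finite, since $\Phi(x/n,y)=0$ whenever $n>x$. So no convergence issue arises, and the counts add up to $[x]$.

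There is no real obstacle. The only point needing care is the convention that $k=1$ is counted in $\Phi$ and that $\Phi(t,y)=0$ for $t<1$; both follow directly from the definition of $\Phi$.
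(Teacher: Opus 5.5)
Your proof is correct and is the same as the paper's, which consists of the single observation that each natural $m\le x$ factors uniquely as a $y$-smooth part times a $y$-rough part. You additionally spell out the bookkeeping the paper leaves implicit: the case $x<1$, counting $k=1$ as $y$-rough, and the fact that only finitely many terms are nonzero.
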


\begin{proof}
Every natural $m\le x$ can be written uniquely as $m=nr$, where $n$ is $y$-smooth and $r$ is $y$-rough. 
\end{proof}

\begin{lemma}\label{LemFinalSum}
For $x\ge 1$, $y\ge 2$,
$$
\Psi(x,y) = \alpha_y x - x\sum_{n\in \mathcal{S}_y}\frac{1}{n } \mu_y\left(\frac{\log x/n}{\log y }\right)
+R(x,y)-\{x\}.
$$
For fixed $\varepsilon>0$, we have $R(x,y) \ll x e^{-u/2}L_\varepsilon(y)^{-1}$, for $x\ge 1$, $y\ge 2$.
\end{lemma}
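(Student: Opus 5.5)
Two things need proof here: the identity for $\Psi(x,y)$ and the bound $R(x,y)\ll x e^{-u/2}L_\varepsilon(y)^{-1}$. I would derive the identity from Lemma \ref{Lem1} and get the bound from Lemma \ref{PhiLem} together with Lemma \ref{LemRQ}.

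\textbf{The identity.} Apply Lemma \ref{Lem1} and substitute $\Phi(x/n,y)=V(x/n,y)-Q(x/n,y)$ term by term. Summing $Q(x/n,y)$ over $n\in\mathcal{S}_y$ gives exactly $R(x,y)$ by definition, so
\[
[x]=\sum_{n\in\mathcal{S}_y}V(x/n,y)-R(x,y).
\]
Now expand $V(x/n,y)$ into its three pieces.
\begin{enumerate}
\item The indicator $1_{x/n\ge 1}$ summed over $n\in\mathcal{S}_y$ counts the $y$-smooth $n\le x$, so it contributes $\Psi(x,y)$.
\item The constant part contributes $x\bigl(\Pi(y)-e^{-\gamma}/\log y\bigr)\sum_{n\in\mathcal{S}_y}1/n$. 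By \eqref{Sum1} this equals $x(1-\alpha_y)$.
\item The last part contributes $x\sum_{n\in\mathcal{S}_y}\frac1n\,\mu_y\!\left(\frac{\log(x/n)}{\log y}\right)$. Here $\mu_y(w)=0$ for $w\le 0$, so only $n<x$ occur and the series converges absolutely.
\end{enumerate}
Substituting these and using $[x]=x-\{x\}$ gives
\[
\Psi(x,y)=\alpha_y x - x\sum_{n\in\mathcal{S}_y}\frac1n\,\mu_y\!\left(\frac{\log(x/n)}{\log y}\right)+R(x,y)-\{x\}.
\]
Splitting $\sum V(x/n,y)$ into three separate series is legitimate because $\sum_n Q(x/n,y)$ converges absolutely. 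For $n>x$, Lemma \ref{Lem0} gives $Q(x/n,y)\ll x/n$, which is summable against $\sum 1/n<\infty$ over $\mathcal{S}_y$. Each of the three pieces also converges separately.

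\textbf{The bound.} Apply Lemma \ref{PhiLem} with $A=1/2$ and $\varepsilon/2$ in place of $\varepsilon$, and with $\delta$ small enough that $e^{A+\delta}\ge 2$ (for instance $\delta=1/2$). This gives $Q(x,y)\ll x e^{-u/2}L_{\varepsilon/2}(y)^{-1}$ for $y\ge e^{1}$. Lemma \ref{LemRQ} then yields
\[
R(x,y)\ll x e^{-u/2}L_{\varepsilon/2}(y)^{-1}\log y\ll x e^{-u/2}L_\varepsilon(y)^{-1},
\]
since $\log y\,L_{\varepsilon/2}(y)^{-1}\ll L_\varepsilon(y)^{-1}$.

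For the remaining range $2\le y<e$, $y$ is bounded, so $L_\varepsilon(y)\asymp 1$ and it suffices to show $R(x,y)\ll x e^{-u/2}$. Here $\mathcal{S}_y$ consists of the powers of $2$ and $u=\log x/\log y\ge \log x$. Lemma \ref{LemFinalSum}'s first part gives $R=\Psi-\alpha_y x+x\sum_n\frac1n\mu_y+\{x\}$, with $\Psi(x,y)\ll\log x$. The bound would then follow if the main terms cancel to within $O(xe^{-u/2})$. This small-$y$ range is the main obstacle. A cleaner route is to check that the hypothesis $e^{A+\delta}\ge 2$ in Lemma \ref{PhiLem} permits $y$ down to $2$ when $\delta$ is chosen so that $A+\delta\ge\log 2$; with $A=1/2$ this holds for any $\delta\ge\log 2-1/2$. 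If Lemma \ref{LemRQ} is likewise stated for $y\ge e^{A+\delta}$, then choosing $\delta$ small with $1/2+\delta\ge\log 2$ covers all $y\ge 2$, and the case analysis is unnecessary.
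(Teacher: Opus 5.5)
Your proposal is correct and follows the paper's proof. The identity comes from substituting $\Phi=V-Q$ into Lemma \ref{Lem1} and using \eqref{Sum1}, and the bound comes from Lemmas \ref{PhiLem} and \ref{LemRQ} with $A=1/2$ and $\varepsilon/2$ in place of $\varepsilon$. Your detour through $2\le y<e$ is unnecessary: Lemma \ref{LemRQ} is stated for $y\ge e^{A+\delta}$, so taking exactly $\delta=\log 2-1/2>0$ (giving $e^{A+\delta}=2$) covers all $y\ge 2$ at once, which is what the paper implicitly does.
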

\begin{proof}
The first statement follows from replacing $\Phi$ by $V-Q$ in Lemma \ref{Lem1}, the definitions of $V$ and $R$, and equation \eqref{Sum1}.
The second statement follows from Lemmas \ref{LemRQ} and \ref{PhiLem}, with $A=1/2$ and $\varepsilon$ replaced by $\varepsilon/2$.
\end{proof}

\begin{lemma}\label{LemInt}
For $x\ge 1$, $y\ge 2$,
$$
\Psi(x,y) =  \alpha_y x  - x\int_1^x \frac{\Psi(t,y)}{t^2 \log y}\omega\left(\frac{\log x/t}{\log y }\right) dt 
+R(x,y)-\{x\}.
$$
\end{lemma}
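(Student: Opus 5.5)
Starting from Lemma \ref{LemFinalSum}, it suffices to prove that for $x\ge 1$, $y\ge 2$,
$$
\sum_{n\in \mathcal{S}_y}\frac{1}{n}\, \mu_y\!\left(\frac{\log x/n}{\log y}\right) = \int_1^x \frac{\Psi(t,y)}{t^2\log y}\,\omega\!\left(\frac{\log x/t}{\log y}\right) dt .
$$
The idea is to write the left side as a Stieltjes integral against $d\Psi(t,y)$ and then integrate by parts. First we record that $\mu_y(w)=0$ for $w\le 0$, since the range of integration is empty. Hence only $n\le x$ contribute, and the sum is finite. Then
$$
\sum_{n\in\mathcal{S}_y,\ n\le x}\frac{1}{n}\mu_y\!\left(\frac{\log x/n}{\log y}\right)=\int_{1^-}^{x} \frac{1}{t}\,\mu_y\!\left(\frac{\log x/t}{\log y}\right) d\Psi(t,y).
$$

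Next we rewrite $t^{-1}\mu_y(\cdot)$ so that its derivative in $t$ is transparent. With $w=(\log x/t)/\log y$, substitute $s=y^{v}$ in the definition of $\mu_y$. This gives
$$
\mu_y(w)=\int_1^{x/t}\omega\!\left(\frac{\log(x/(ts))}{\log y}\right)\frac{ds}{s^2\log y}.
$$
Putting $r=ts$ we get
$$
g(t):=\frac{1}{t}\mu_y\!\left(\frac{\log x/t}{\log y}\right)=\int_t^x \omega\!\left(\frac{\log x/r}{\log y}\right)\frac{dr}{r^2\log y}.
$$
So $g$ is continuous on $[1,x]$ with $g(x)=0$, and $g'(t)=-\omega\big((\log x/t)/\log y\big)/(t^2\log y)$ except at the finitely many points where $\omega$ jumps. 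Here $\omega(w)=0$ for $w<1$ and $\omega$ is bounded, so $g$ is absolutely continuous.

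Integration by parts on $[1^-,x]$ then gives
$$
\int g\,d\Psi=g(x)\Psi(x,y)-g(1^-)\Psi(1^-,y)-\int_1^x\Psi(t,y)g'(t)\,dt .
$$
The boundary terms vanish because $g(x)=0$ and $\Psi(1^-,y)=0$. The remaining integral is exactly the right side of the claimed identity. Substituting it into Lemma \ref{LemFinalSum} yields the lemma.

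The main obstacle is the careful justification of the integration by parts and the change of variables. One must treat the jump of $\Psi$ at $t=1$ (include $n=1$ by starting at $1^-$) and the endpoint $t=x$ when $x$ is itself $y$-smooth. There $g(x)=0$, so the atom contributes nothing, consistent with $\mu_y(0)=0$. Since $g$ is continuous, the Stieltjes parts formula holds with no extra jump terms.
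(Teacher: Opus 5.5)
Your proof is correct and is essentially the paper's argument: partial summation against $\Psi(t,y)$, using $\frac{d}{dt}\bigl[t^{-1}\mu_y(\log(x/t)/\log y)\bigr]=-\omega(\log(x/t)/\log y)/(t^2\log y)$. The only difference is how this derivative is obtained: the paper proves $\mu_y(u)\log y+\mu_y'(u)=\omega(u)$ by integrating by parts, which requires $\omega'$, whereas your change of variables writes $t^{-1}\mu_y$ directly as a tail integral and so avoids differentiating $\omega$.
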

\begin{proof}
Integration by parts yields, for $u\ge 1$,
$$
\mu_y(u)\log y = \omega(u) - y^{1-u}-\int_0^{u-1} \omega'(u-v) y^{-v} dv=\omega(u)-\mu_y'(u).
$$
Thus, $\mu_y(u)\log y +\mu_y'(u) = \omega(u)$. This also holds for $u<1$ since all three terms vanish there. 
Partial summation shows that the sum in Lemma \ref{LemFinalSum} is the same as the integral in Lemma \ref{LemInt}.
\end{proof}

\begin{proof}[Proof of Theorem \ref{thm1}]
Write $\psi_y(u)=\Psi(y^u,y)/y^u$ and $t=y^v$. Lemma \ref{LemInt} implies, for $y\ge 2$ and $u\ge 0$,
$$
\psi_y(u)=\alpha_y - \int_0^u \psi_y(v) \omega(u-v) dv+ E_y(u),
$$
where 
$$
E_y(u) := R(y^u,y)/y^u - \{y^u\}/y^u .
$$
This leads to the equation of Laplace transforms, for $\re(s)>0$,
$$
\widehat{\psi}_y(s)=\frac{\alpha_y}{s} - \widehat{\psi}_y(s) \widehat{\omega}(s) + \widehat{E}_y(s).
$$
Since $s\widehat{\rho}(s) = 1/(1+\widehat{\omega}(s))$ by \cite[Thm III.6.7]{Ten}, we have
$$
\widehat{\psi}_y(s)=\alpha_y \widehat{\rho}(s) + \widehat{E}_y(s) s\widehat{\rho}(s)=\alpha_y \widehat{\rho}(s) + \widehat{E}_y(s) (1+\widehat{\rho'}(s)).
$$
Hence, for $u\ge 0$,
$$
\psi_y(u) = \alpha_y \rho(u) +E_y(u)+ \int_0^u E_y(v) \rho'(u-v) dv.
$$
Multiplying by $x$, we obtain, for $x\ge 1$, $y\ge 2$,
$$
\Psi(x,y) = \alpha_y x \rho(u)+R(x,y) - \{x\} + x\int_0^u \frac{R(y^v,y) -\{y^v\} }{y^v} \rho'(u-v) dv .
$$
Since  $ \alpha_y=\beta_y +1$, the definition of $\Lambda$ in \eqref{LambdaDef} implies
\begin{equation}\label{eqvar2}
\Delta(x,y)= \beta_y x \rho(u) +R(x,y) + x \int_{0}^u \frac{R(y^v,y)}{ y^{v}} \rho'(u-v) dv.
\end{equation}
The identity \eqref{thm1eq1} now follows from \eqref{eqvar2},
because $R(x,y)=-\beta_y x$ when $0<x<1$, by Lemma \ref{Lem0}.

Equation \eqref{thm1eq1*} follows from \eqref{eqvar2} and \eqref{RReq}. 

To derive \eqref{thm1eq2} from \eqref{eqvar2}, we define, for $y\ge 2$, $u\ge 0$,
$$
\delta_y(u):=\Delta(y^u,y)/y^u, \quad r_y(u):=R(y^u,y)/y^u.
$$ 
Dividing \eqref{eqvar2} by $x$, 
we obtain the equation of Laplace transforms
$$
\widehat{\delta}_y(s)=\beta_y \widehat{\rho}(s)+\widehat{r}_y(s)+ \widehat{r}_y(s)\widehat{\rho'}(s)
=\beta_y \widehat{\rho}(s)+ \widehat{r}_y(s)s \widehat{\rho}(s),
$$
for $\re(s)>0$.
Thus,
$$
\widehat{r}_y(s)=\frac{\widehat{\delta}_y(s)}{s \widehat{\rho}(s)}- \frac{\beta_y}{s}=\widehat{\delta}_y(s)(1+\widehat{\omega}(s)) - \frac{\beta_y}{s}.
$$
Inverting the Laplace transforms yields, for $x\ge 1$, $y\ge 2$,
\begin{equation}\label{eqR}
\frac{R(x,y)}{x}= \frac{\Delta(x,y)}{x} +  \int_0^u \frac{\Delta(y^v,y)}{y^v} \omega(u-v) dv - \beta_y .
\end{equation}
Letting $x\to \infty$ while keeping $y$ fixed, we find that
\begin{equation*}\label{intbeta}
 \int_0^\infty \frac{\Delta(y^v,y)}{y^v} e^{-\gamma} dv = \beta_y,
\end{equation*}
because
$$
R(x,y)\ll x e^{-u/2},\quad  \Delta(x,y)\ll x e^{-u/2},\quad \omega(u)-e^{-\gamma} \ll e^{-u},
$$
by Lemma \ref{LemFinalSum}, equation \eqref{thm1eq1} and \cite[Cor. III.6.5]{Ten}. This completes the proof of \eqref{thm1eq2}.

Equation \eqref{thm1eq2*} follows from \eqref{eqR} and \eqref{RReq}. 
\end{proof}

\section*{Acknowledgements}
The author thanks Eric Saias for a conversation that motivated this study.

\end{document}